\theoremstyle{definition}
\newtheorem{theorem}{Theorem}
\newtheorem{corollary}[theorem]{Corollary}
\newtheorem{problem}[theorem]{Problem}
\newtheorem{proposition}[theorem]{Proposition}
\newtheorem{definition}[theorem]{Definition}
\newcommand{\supp}[1]{\mathrm{supp}\left(#1\right)}
\newcommand\Wp{\mathcal{W}_p}
\title[Isometric rigidity of $\mathcal{W}_p(X)$ - the graph metric case]{Isometric rigidity of Wasserstein spaces: \\ the graph metric case}
\author[Gergely Kiss]{Gergely Kiss}
\address{Gergely Kiss, Alfr\'ed R\'enyi Institute of Mathematics -- Eötvös Loránd Research Network\\ Re\'altanoda u. 13-15.\\ Budapest H-1053\\ Hungary.}
\email{kiss.gergely@renyi.hu}
\author[Tam\'as Titkos]{Tam\'as Titkos}
\address{Tam\'as Titkos, Alfr\'ed R\'enyi Institute of Mathematics -- Eötvös Loránd Research Network\\ Re\'altanoda u. 13-15.\\
Budapest H-1053\\ Hungary\\ and BBS University of Applied Sciences\\ Alkotm\'any u. 9.\\
Budapest H-1054\\ Hungary.}
\email{titkos.tamas@renyi.hu}
\thanks{Corresponding author: Tamás Titkos, titkos.tamas@renyi.hu}
\thanks{G. Kiss was supported by Premium Postdoctoral Fellowship of the Hungarian Academy of Sciences and by the Hungarian National Research, Development and Innovation
Office - NKFIH (grant no. K124749). T. Titkos was supported by the Hungarian National Research, Development and Innovation Office - NKFIH (grant no. PD128374 and grant no. K134944), by the János Bolyai Research Scholarship and the Momentum Program No. LP2021-15/202 of the Hungarian Academy of Sciences, and by the ÚNKP-20-5-BGE-1 New National Excellence Program of the Ministry of Innovation and Technology.}
\begin{document}
\subjclass[2020]{Primary: 54E40; 46E27  Secondary: 54E70; 05C12}

\keywords{Wasserstein space, graph metric space, isometry, isometric rigidity}

\begin{abstract}
The aim of this paper is to prove that the $p$-Wasserstein space $\mathcal{W}_p(X)$ is isometrically rigid for all $p\geq1$ whenever $X$ is a countable graph metric space. As a consequence, we obtain that for every countable group ${H}$ and any $p\geq1$ there exists a $p$-Wasserstein space whose isometry group is isomorphic to ${H}$.
\end{abstract}
\maketitle
\section{Introduction}
Due to its deep impact on both pure and applied sciences, one of the most intensively studied metric spaces nowadays is the so-called $p$-Wasserstein space $\mathcal{W}_p(X)$: the collection of Borel probability measures on a complete separable metric space $(X,\varrho)$ with finite $p$-th moment, endowed with a transport related metric $d_p$, which is calculated by means of optimal couplings and the $p$-th power of the underlying distance $\varrho$. We mention here only three comprehensive textbooks \cite{AG,Santambrogio,Villani}, more references and precise definitions  will follow later. In this paper we consider those $p$-Wasserstein spaces whose underlying metric space $(X,\varrho)$ is a graph metric space. This class contains many important metric spaces, just to mention a few: any countable set with the discrete metric; the set of natural numbers and the set of integers with the usual $|\cdot|$-distance; $d$-dimensional lattices endowed with the $\mathit{l}_1$-metric or the $\mathit{l}_{\infty}$-metric (for the relevance of these metrics in pattern recognition see e.g. \cite{Rhodes}); finite strings with the Hamming distance (as it was mentioned in \cite{DP-M-T-L} in connection with the quantum $1$-Wasserstein distance, the classical $1$-Wasserstein distance with respect to the Hamming metric is called Ornstein’s distance, and was first considered in \cite{Ornstein}); and finite regular trees (see the very recent manuscript \cite{Jiradilok}).

When working with a structure, the most fundamental and natural task is to explore its transformations and symmetries.
In the case of metric spaces, such symmetries are \emph{isometries}, that is, distance preserving bijections. In the recent past, many authors investigated isometries of various important metric spaces of probability measures \cite{bertrand-kloeckner-2016,DolinarKuzma,DolinarMolnar,LP,JMAA,TAMS,HIL,Kloeckner-2010,Levy,S-R,Virosztek} . To mention a few, in \cite{Levy} Molnár explored the structure of isometries of the space of distribution functions with respect to the Lévy distance. Later Gehér and the second author generalised his result to the Lévy-Prokhorov metric in \cite{LP}. Namely, it was shown that if the space $\mathcal{P}(X)$ of all Borel probability measures on a real Banach space $(X,\|\cdot\|)$ is endowed with the Lévy-Prokhorov metric $d_{\mathrm{LP}}$, then the isometry group of $\big(\mathcal{P}(X),d_{\mathrm{LP}}\big)$ is isomorphic to the isometry group of the underlying space $X$. Bertrand and Kloeckner showed that a similar phenomenon occurs when one considers a $2$-Wasserstein space built on a negatively curved metric space: each isometry of the space of measures is the push-forward of an isometry of the underlying space. This phenomenon is called \emph{isometric rigidity}. Finally, we mention a very recent rigidity result, Santos-Rodríguez proved that $p$-Wasserstein spaces built on compact rank one symmetric spaces are all isometrically rigid \cite{S-R} if $p>1$.\\

Our main result is \textbf{Theorem \ref{t:main}}, where we prove that $p$-Wasserstein spaces over graph metric spaces are all isometrically rigid. As a consequence, in \textbf{Corollary \ref{c:aut}} we will conclude that for every countable group $G$ and for every $p\geq 1$ there exists a $p$-Wasserstein space whose isometry group is isomorphic to $G$.\\

Before going into the details, we make some comments on rigidity. It comes easy to say that these rigidity results are not surprising because of the intimate connection between $d_p$ and $\varrho$. It is well known that if $p\geq1$ then the distance between any two Dirac measures equals to the distance of their supporting points, and every measure can be approximated by convex combinations of Dirac measures. In other words, $\mathcal{W}_p(X)$ contains an isometric copy of $X$, and the convex hull of this copy is dense in $\mathcal{W}_p(X)$. Moreover, $\mathcal{W}_p(X)$ inherits many nice properties of $X$, e.g. completeness, compactness, existence of geodesics. So one may have the impression that although the $p$-Wasserstein space $\mathcal{W}_p(X)$ is much bigger than $X$ (see e.g. \cite{Kloeckner-Hausdorff} for many interesting results), the strong connection between the metrics does not allow $\mathcal{W}_p(X)$ to have more symmetries than $X$ has. A possible sketch of proof is this:
\begin{itemize}
    \item[\underline{Step 1.}] Prove that an isometry $\Phi:\mathcal{W}_p(X)\to\mathcal{W}_p(X)$ leaves the set of Dirac masses invariant. Once it is done, one can conclude that the action on Dirac masses is generated by an isometry $f:X\to X$, that is, $\Phi(\delta_x)=\delta_{f(x)}$ for all $x\in X$.
    \item[\underline{Step 2.}] Prove that this action extends to a set $\mathcal{S}$ of finitely supported probability measures, that is, $\Phi\Big(\sum\limits_{j\in J}\lambda_j\delta_{x_j}\Big)=\sum\limits_{j\in J}\lambda_j\delta_{f(x_j)}$ for all $\sum\limits_{j\in J}\lambda_j\delta_{x_j}\in\mathcal{S}$.
    \item[\underline{Step 3.}] Show that $\mathcal{S}$ is dense in $\mathcal{W}_p(X)$. Since $\Phi$ is continuous, $\Phi$ must be the push-forward of $f^{-1}$, where $f$ is the above defined isometry.
\end{itemize}

The problem with this sketch is that it does not work in general. And even if it works, these seemingly easy steps can be nontrivial. For example, {Step 1} fails if $p=1$ and $X=[0,1]$. In that case, there exists an isometry $j$ (called \emph{flip}) which is mass-splitting, i.e. which does not leave the set of Dirac masses invariant: $j(\delta_t)=t\delta_0+(1-t)\delta_1$ for all $t\in[0,1]$, for more details see \cite[Section 2.1]{TAMS}. {Step 2} can fail (even if Step 1 can be done) as it was shown by Kloeckner in \cite{Kloeckner-2010}: if $p=2$ and $X=\mathbb{R}$, then there exist a flow of strangely behaving isometries. These isometries leave all Dirac masses fixed, but they differ from the identity of $\mathcal{W}_2(\mathbb{R})$, for more details see \cite[Section 5.1]{Kloeckner-2010}. We mention that all these strange isometries disappear once we modify the value of $p$: it was proved in \cite{TAMS} that if $p\neq1$, then $\mathcal{W}_p\big([0,1]\big)$ is isometrically rigid, and similarly, if $p\neq2$, then $\mathcal{W}_p(\mathbb{R})$ is isometrically rigid. Furthermore, Gehér et al. showed in \cite[Section 2]{HIL} that for every $p\geq 1$ there exists a compact metric space $X$ such that $\mathcal{W}_p(X)$ admits mass-splitting isometries.

Summarising the above comments, we can say that isometric rigidity of $p$-Wasserstein spaces is a quite regular phenomenon (only a few non-rigid example is known), but there is no general recipe which helps to decide whether a space is rigid or not.\\

\noindent\textbf{Acknowledgements:} We would like to thank the referee for the careful reading of the manuscript and the constructive comments that helped us to improve the presentation.

\section{Technical preliminaries}
First we fix the terminology. Given a metric space $(Y,d)$, we call a bijection $f:Y\to Y$ an \emph{isometry} if $d(f(x),f(y))=d(x,y)$ holds for all $x,y\in Y$. The \emph{isometry group} of $(Y,d)$ will be denoted by $\mathrm{Isom}(Y,d)$. For two groups $H_1,H_2$ the symbol $H_1 \cong H_2$ means that they are isomorphic.

In this paper $G(X,E)$ always denotes a graph with a countable vertex set $X$ and edge set $E$. Two different vertices $x,y\in X$ are called adjacent if there exists an edge $e\in E$ which joins them. A \emph{path} between two vertices $x,y\in X$ is a finite sequence of distinct edges which joins a sequence $x_0=x, x_1,\dots,x_k=y$ of distinct adjacent vertices. The length of such a path is the number of edges that the path contains. The graph $G(X,E)$ is called connected if every two different vertices can be connected by a path. The vertex set $X$ of a connected graph can always be endowed with a metric: for $x,y\in X$ we say that the \emph{shortest path distance} of $x$ and $y$ is the minimum number of $k$ such that there exists a path between $x$ and $y$ of length $k$. Now we can define the central notion of this paper: we say that a countable metric space $(X,\varrho)$ is a \emph{graph metric space} if there exists a connected graph $G(X,E)$, such that the shortest path distance in the graph coincides with the distance $\varrho$ in $X$. That is, $\varrho(x,y)$  equals to the minimum number of edges in a path in $G(X,E)$ between $x$ and $y$. Since the existence of loops and multiple edges do not change the length of the shortest path, one can assume that the graph in question is simple.\footnote{The following characterization of graph metric spaces was proved in \cite{CK}: a countable metric space $(X, \varrho)$ is a graph metric space if and only if the distance between every two points of $X$ is an integer, and if $a, b \in X$ and $\varrho(a, b) \geq2$ then there exists a point $x \in M$ such that $\varrho(a, x) > 0$, $\varrho(x, b) > 0$, and $x$ saturates the triangle inequality: $\varrho(a, b) =\varrho(a, x) + \varrho(x,b)$. It was assumed in \cite{CK} that the graph is finite, but the proof works in the countable case as well.}

Now we recall the notion of a $p$-Wasserstein space in the special case when the underlying metric space is a countable graph metric space $(X,\varrho)$. The symbol $\mathcal{M}_+(X)$ stands for the set of nonnegative Borel measures on $X$. In this setting each measure $\mu\in\mathcal{M}_+(X)$ is uniquely determined by its value on singletons:
\begin{equation}\mu(A)=\sum_{x\in A}\mu(\{x\})\qquad\mbox{for all}\quad A\subseteq X,
\end{equation}
and therefore $\mu$ can be handled as a one-variable function on $X$. Such a function is often referred to as a \emph{probability mass function}. For the sake of simplicity, we will write shortly $\mu(x)$ instead of $\mu(\{x\})$. For a given real number $p\geq1$ we denote by $\mathcal{W}_p(X)$ the set of all probability measures such that
\begin{equation}
    \sum\limits_{x\in X}\varrho^p(x,\hat{x})\mu(x)<\infty
    \end{equation}
for some (hence all) $\hat{x}\in X$. The \emph{support} ${\mu}$ of a $\mu\in\mathcal{W}_p(X)$ in this setting equals to the set $\{x\in X\,|\,\mu(x)\neq0\}$.
A Borel probability measure $\pi$ on $X \times X$ is said to be a \emph{coupling} for $\mu$ and $\nu$ if the marginals of $\pi$ are $\mu$ and $\nu$, that is,
\begin{equation}
\sum\limits_{x'\in X}\pi(x,x')=\mu(x)\qquad\mbox{and}\qquad\sum\limits_{x\in X}\pi(x,x')=\nu(x').
\end{equation}
The set of all couplings (which is never empty because the product measure is a coupling) is denoted by $\Pi(\mu,\nu)$.  We will refer to couplings as transport plans, as $\pi(x,x')$ is the weight of mass that is transported from $x$ to $x'$ while $\mu$ is transported to $\nu$ along $\pi$. For a given measure $\mu$ we will denote by $\pi_{\mu}\in\Pi(\mu,\mu)$ the coupling which leaves $\mu$ undisturbed, that is, $\pi_{\mu}(x,x):=\mu(x)$ for all $x\in X$ and $\pi(x,y):=0$ otherwise.

If the cost function on $X\times X$ is $\varrho^p$, then the optimal cost of transporting $\mu$ into $\nu$ is
\begin{equation}\label{def:dw}
    d_p(\mu,\nu):=\Big(\inf_{\pi\in\Pi(\mu,\nu)}\sum_{(x,x')\in X\times X}\varrho^p(x,x')\cdot\pi(x,x')\Big)^{1/p}.
\end{equation}
It is known (see e.g. Theorem 1.5 in \cite{AG} with $c=\varrho^p$) that the infimum in \eqref{def:dw} is in fact a minimum. Those transport plans that minimise the transport cost are called \emph{optimal transport plans}.
We will refer to the metric space $\big(\mathcal{W}_p(X),d_p\big)$ as the $p$-Wasserstein space $\mathcal{W}_p(X)$. Let us denote the set of all \emph{finitely supported probability measures} by $\mathcal{F}(X)$. A very important feature of $p$-Wasserstein spaces is that if $p\geq1$, then $X$ embeds into $\mathcal{W}_p(X)$ isometrically (that is, $d_p(\delta_x,\delta_y)=\varrho(x,y)$ for all $x,y\in X$) and that $\mathcal{F}(X)$ is dense in $\mathcal{W}_p(X)$ (see e.g. Example 6.3 and Theorem 6.16 in \cite{Villani}). Although it is known that the isometry group of $X$ embeds into the isometry group of $\Wp(X)$, we provide with a short proof here for the sake of completeness.
\begin{proposition}\label{embedding}
Let $(X,\varrho)$ be countable graph metric space and let $p\geq1$ be fixed. Then the push-forward $\psi\mapsto\psi_\#$ defined by
\begin{equation}
\big(\psi_\#(\mu)\big)(x):=\mu\big(\psi^{-1}(x)\big)\qquad(x\in X)
\end{equation}
induces an embedding, which is in fact a group homomorphism
\begin{equation}
    \#:\mathrm{Isom}(X,\varrho)\to\mathrm{Isom}(\Wp(X),d_p).
\end{equation}
\end{proposition}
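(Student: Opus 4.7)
The plan is to verify four standard properties in turn: (i) $\psi_\#$ maps $\mathcal{W}_p(X)$ into itself, (ii) each $\psi_\#$ is a bijection, (iii) each $\psi_\#$ preserves $d_p$, and (iv) the assignment $\psi \mapsto \psi_\#$ is an injective group homomorphism. Throughout I will use only the pointwise definition $(\psi_\#\mu)(x) = \mu(\psi^{-1}(x))$ and the fact that $\psi$ preserves $\varrho$.

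For (i), a single change of variables suffices: since $\psi$ is a bijection of $X$ onto itself, for any $\hat{x}\in X$ one has
\begin{equation}
\sum_{x\in X}\varrho^p(x,\hat{x})(\psi_\#\mu)(x)=\sum_{x\in X}\varrho^p(x,\hat{x})\mu(\psi^{-1}(x))=\sum_{y\in X}\varrho^p(\psi(y),\hat{x})\mu(y),
\end{equation}
and choosing $\hat{x}=\psi(\hat{y})$ together with the isometry property of $\psi$ turns this into $\sum_{y\in X}\varrho^p(y,\hat{y})\mu(y)<\infty$. A similar direct computation gives both $(\psi_1\circ\psi_2)_\# = (\psi_1)_\#\circ(\psi_2)_\#$ and $(\mathrm{id}_X)_\# = \mathrm{id}_{\mathcal{W}_p(X)}$, from which (ii) follows at once: $(\psi^{-1})_\#$ is a two-sided inverse of $\psi_\#$ on $\mathcal{W}_p(X)$, and (iv) is built in once (iii) is proved.

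The central step (iii) is an application of the transport-plan push-forward. For any $\pi\in\Pi(\mu,\nu)$, define $(\psi\times\psi)_\#\pi$ on $X\times X$ by $\bigl((\psi\times\psi)_\#\pi\bigr)(x,x'):=\pi(\psi^{-1}(x),\psi^{-1}(x'))$. The marginal conditions for $\psi_\#\mu$ and $\psi_\#\nu$ follow immediately from those for $\mu$ and $\nu$, and the inverse construction $\pi\mapsto (\psi^{-1}\times\psi^{-1})_\#\pi$ shows that this is a bijection $\Pi(\mu,\nu)\to\Pi(\psi_\#\mu,\psi_\#\nu)$. Moreover, because $\varrho^p(\psi(y),\psi(y'))=\varrho^p(y,y')$ for all $y,y'\in X$, this bijection preserves the total transport cost:
\begin{equation}
\sum_{(x,x')\in X\times X}\varrho^p(x,x')\bigl((\psi\times\psi)_\#\pi\bigr)(x,x')=\sum_{(y,y')\in X\times X}\varrho^p(y,y')\pi(y,y').
\end{equation}
Taking infima over $\Pi(\mu,\nu)$ on both sides then gives $d_p(\psi_\#\mu,\psi_\#\nu)=d_p(\mu,\nu)$.

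Finally, the injectivity of $\#$ is immediate from its action on Dirac masses: $\psi_\#\delta_x=\delta_{\psi(x)}$, so if $\psi_\#$ is the identity on $\mathcal{W}_p(X)$ then $\psi$ fixes every point of $X$. There is really no serious obstacle here; the only conceptual point worth isolating is the cost-preserving bijection of transport plans in step (iii), which is the standard mechanism by which base-space symmetries lift to Wasserstein-space symmetries. All remaining items are routine book-keeping with the pointwise formula for the push-forward.
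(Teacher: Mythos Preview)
Your proof is correct and follows essentially the same route as the paper: both arguments push forward couplings via $(\psi\times\psi)_\#$ to compare transport costs, and both verify the homomorphism identity $(\psi\circ\chi)_\#=\psi_\#\circ\chi_\#$ by direct computation. Your version is in fact slightly more complete, since you also check the finite-$p$-th-moment condition and the injectivity of $\#$ on Dirac masses, points the paper leaves implicit.
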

\begin{proof} Let us fix a $\psi\in\mathrm{Isom}(X,\varrho)$, two measures $\mu,\nu\in\Wp(X)$, and an optimal transport plan $\widetilde{\pi}\in\Pi(\mu,\nu)$. Since $\mathrm{Isom}(X,\varrho)$ is a group, $\psi^{-1}$ is a bijection such that $\varrho(\psi(x),\psi(y))=\varrho(x,y)=\varrho(\psi^{-1}(x),\psi^{-1}(y))$ for all $x,y\in X$. Furthermore, $\pi'(x,y):=\widetilde{\pi}(\psi^{-1}(x),\psi^{-1}(y))$ is a coupling for $\psi_\#(\mu)$ and $\psi_\#(\nu)$, because
$\sum\limits_{y\in X}\widetilde{\pi}(\psi^{-1}(x),\psi^{-1}(y))=\mu(\psi^{-1}(x))=\psi_\#(\mu)(x)$ and
$\sum\limits_{x\in X}\widetilde{\pi}(\psi^{-1}(x),\psi^{-1}(y))=\nu(\psi^{-1}(y))=\psi_\#(\nu)(y)$. Using that the $p$-Wasserstein distance is always smaller or equal to the cost of any coupling, the above observation implies
\begin{equation}
\begin{split}
    d_p^p(\mu,\nu)&=\sum_{(x,y)\in X\times X}\varrho^p(x,y)\cdot\widetilde{\pi}(x,y)\\
    &=\sum_{(x,y)\in X\times X}\varrho^p(\psi^{-1}(x),\psi^{-1}(y))\cdot\widetilde{\pi}(\psi^{-1}(x),\psi^{-1}(y))\\
    &=\sum_{(x,y)\in X\times X}\varrho^p(x,y)\cdot\widetilde{\pi}(\psi^{-1}(x),\psi^{-1}(y))\geq d_p^p(\psi_\#(\mu),\psi_\#(\nu)).
    \end{split}
\end{equation}
The reverse inequality $d_p^p(\psi_\#(\mu),\psi_\#(\nu))\geq d_p^p(\mu,\nu)$ can be proved along the same lines by using an optimal coupling $\widehat{\pi}\in\Pi(\psi_\#(\mu),\psi_\#(\nu))$ and the observation that $\pi''$ defined by $\pi''(x,y):=\widehat{\pi}(\psi(x),\psi(y))$ is a coupling for $\mu$ and $\nu$.

Finally, we verify that the map $\psi\mapsto\psi_\#$ is indeed a group homomorphism. For all $\psi,\chi\in\mathrm{Isom}(X,\varrho)$ and for all $\mu\in\Wp(X)$ and $x\in X$ we have
\begin{equation}
\begin{split}
\Big(\big(\psi\circ\chi\big)_\#(\mu)\Big)(x)&=\mu\Big(\big(\psi\circ\chi\big)^{-1}(x)\Big)=\mu\Big(\chi^{-1}\big(\psi^{-1}(x)\big)\Big)\\
&=\Big(\chi_\#(\mu)\Big)\big(\psi^{-1}(x)\big)=\Big(\psi_\#\big(\chi_\#(\mu)\big)\Big)(x).
\end{split}
\end{equation}
\end{proof}
Isometries of the form $\psi_\#$ are called \emph{trivial isometries}. We say that $\mathcal{W}_p(X)$ is \emph{isometrically rigid} if the map $\#$ is onto, i.e. $\mathrm{Isom}(X,\varrho)\cong\mathrm{Isom}(\Wp(X),d_p)$. In other words, if every isometry of $\Wp(X)$ is trivial.\\

In order to prove isometric rigidity, it would be useful to find properties which can be characterized by means of the metric, and thus are preserved by isometries. As we will see later in Proposition \ref{p:propAB}, the neighbouring property -- which says that two probability mass functions differ at the end-points of a given edge and nowhere else  (see Figure \ref{fig:n} below) -- is one of such properties.
\begin{definition} For a given $\alpha\in(0,1]$ we say that two measures $\mu,\nu\in\Wp(X)$ are \emph{$\alpha$-neighbouring}, if there exists an $\eta\in\mathcal{M}_+(X)$ and $u,v\in X$ with $\varrho(u,v)=1$ such that $\mu=\eta+\alpha\delta_u$ and $\nu=\eta+\alpha\delta_v$. In symbols, we write $\mu\equiv\nu~[\alpha]$.
\end{definition}
  	\begin{figure}[H]
	\centering
	\includegraphics[scale = 0.5]{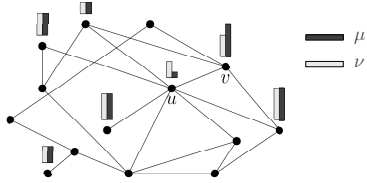}
\caption{An illustration of $\alpha$-neighbouring property.}
\label{fig:n}
	\end{figure}
In particular, $\mu\equiv\nu~[1]$ means that $\mu=\delta_u$ and $\nu=\delta_v$, where $u$ and $v$ are different endpoints of an edge in the underlying graph. We  note that $\equiv$ is not an equivalence relation, as it is not reflexive and not transitive. In order to see the metric side of the neighbouring property, let us introduce the set $B_s(\mu,\nu)$ for $\mu,\nu\in\Wp(X)$
\begin{equation}
    B_s(\mu, \nu):=\Big\{\xi\in\mathcal{W}_p(X)\,\Big|\,d_{p}(\mu, \xi)\le \sqrt[p]{s}d_p(\mu,\nu), \ d_{p}(\xi,\nu)\le\sqrt[p]{(1-s)}d_p(\mu,\nu) \Big\}.
\end{equation}
The following proposition says that $B_s(\mu,\nu)$ is always non-empty. For the sake of brevity, we will use the notation in the sequel $\xi_s^{\mu,\nu}:=(1-s)\mu+ s\nu$.
\begin{proposition}\label{uj} Let $(X,\varrho)$ be a countable graph metric space and let $p\geq1$ and $s\in(0,1)$ be fixed real numbers. Then for any $\mu,\nu\in\Wp(X)$ the measure $\xi_s^{\mu,\nu}$ belongs to $B_s(\mu,\nu)$. In particular, the statement that $B_s(\mu,\nu)$ is a singleton is equivalent to $B_s(\mu,\nu)=\{\xi_s^{\mu,\nu}\}$.
\end{proposition}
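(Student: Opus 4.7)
The plan is to write down an explicit transport plan witnessing each of the two required inequalities, obtained by convex combination of an optimal plan between $\mu$ and $\nu$ with the trivial plans $\pi_\mu$ and $\pi_\nu$. Once $\xi_s^{\mu,\nu}\in B_s(\mu,\nu)$ is established, the ``in particular'' assertion is immediate: if $B_s(\mu,\nu)$ has exactly one element, then since $\xi_s^{\mu,\nu}$ is certainly one such element, that element must be $\xi_s^{\mu,\nu}$.

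For the membership, I would first fix an optimal coupling $\widetilde\pi\in\Pi(\mu,\nu)$, whose existence was recalled after \eqref{def:dw}. To bound $d_p(\mu,\xi_s^{\mu,\nu})$, I would consider the candidate coupling
\begin{equation}
\pi^{(1)} := (1-s)\pi_\mu + s\widetilde\pi.
\end{equation}
A direct check of the marginal conditions shows that $\pi^{(1)}\in\Pi(\mu,\xi_s^{\mu,\nu})$: the first marginal is $(1-s)\mu+s\mu=\mu$, and the second marginal is $(1-s)\mu+s\nu=\xi_s^{\mu,\nu}$. Its cost is
\begin{equation}
\sum_{(x,y)\in X\times X}\varrho^p(x,y)\,\pi^{(1)}(x,y) = (1-s)\cdot 0 + s\cdot d_p^p(\mu,\nu),
\end{equation}
because $\pi_\mu$ is concentrated on the diagonal where $\varrho^p$ vanishes. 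Hence $d_p^p(\mu,\xi_s^{\mu,\nu})\le s\,d_p^p(\mu,\nu)$, which rearranges to $d_p(\mu,\xi_s^{\mu,\nu})\le\sqrt[p]{s}\,d_p(\mu,\nu)$.

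Symmetrically, to control $d_p(\xi_s^{\mu,\nu},\nu)$, I would use the coupling
\begin{equation}
\pi^{(2)} := (1-s)\widetilde\pi + s\pi_\nu \in \Pi(\xi_s^{\mu,\nu},\nu),
\end{equation}
whose cost equals $(1-s)\,d_p^p(\mu,\nu)$, giving $d_p(\xi_s^{\mu,\nu},\nu)\le\sqrt[p]{1-s}\,d_p(\mu,\nu)$. Combining the two inequalities yields $\xi_s^{\mu,\nu}\in B_s(\mu,\nu)$, and the singleton remark then follows trivially. There is no real obstacle here; the only thing to be a little careful about is to use the \emph{same} optimal plan $\widetilde\pi$ twice and to verify the marginals of $\pi^{(1)}$ and $\pi^{(2)}$ explicitly, so that no optimality beyond the initial choice of $\widetilde\pi$ is invoked.
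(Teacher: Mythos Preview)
Your proposal is correct and follows essentially the same argument as the paper: fix an optimal $\widetilde\pi\in\Pi(\mu,\nu)$, use the convex combination $(1-s)\pi_\mu+s\widetilde\pi$ to bound $d_p(\mu,\xi_s^{\mu,\nu})$, and then the analogous combination with $\pi_\nu$ for the other side. The paper's proof is identical up to notation (it writes $\pi_s$ for your $\pi^{(1)}$ and only sketches the second inequality), and your added remark on the ``in particular'' clause is a harmless clarification.
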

\begin{proof}
First we show that $\xi_s^{\mu,\nu}$ satisfies $d_p(\mu,\xi_s^{\mu,\nu})\leq\sqrt[p]{s}d_p(\mu,\nu)$. Let us fix a optimal transport plan $\widetilde{\pi}\in\Pi(\mu,\nu)$. Recall that $\pi_{\mu}$ is a coupling which leaves $\mu$ undisturbed: $\pi_{\mu}(x,x)=\mu(x)$ and $\pi_{\mu}(x,y)=0$ otherwise. Then $\pi_s:=(1-s)\pi_{\mu}+s\widetilde{\pi}\in\Pi(\mu,\xi_s^{\mu,\nu})$. Indeed, $\sum\limits_{y\in X}\Big((1-s)\pi_{\mu}(x,y)+s\widetilde{\pi}(x,y)\Big)=(1-s)\mu(x)+s\mu(x)=\mu(x)$ for all $x\in X$ and $\sum\limits_{x\in X}\Big((1-s)\pi_{\mu}(x,y)+s\widetilde{\pi}(x,y)\Big)=(1-s)\mu(y)+s\nu(y)=\xi_s^{\mu,\nu}(y)$ for all $y\in X$.
Using the transport plan $\pi_s\in\Pi(\mu,\xi_s)$ we can estimate $d_p(\mu,\xi_s^{\mu,\nu})$ as
\begin{equation}
\begin{split}
    d_p^p(\mu,\xi_s^{\mu,\nu})&=\inf_{\pi\in\Pi(\mu,\xi_s)}\sum_{(x,y)\in X\times X}\varrho^p(x,y)\cdot\pi(x,y)\leq\sum_{(x,y)\in X\times X}\varrho^p(x,y)\cdot\pi_s(x,y)\\
    &=(1-s)\sum_{(x,y)\in X\times X}\varrho^p(x,y)\cdot\pi_{\mu}(x,y)+s\sum_{(x,y)\in X\times X}\varrho^p(x,y)\cdot\widetilde{\pi}(x,y)\\
    &=s d_p^p(\mu,\nu),
\end{split}
\end{equation}
where we used $\widetilde{\pi}$ is optimal and that $\varrho^p(x,y)\cdot\pi_{\mu}(x,y)=0$ for all $(x,y)\in X\times X$. The other inequality $d_p(\xi_s^{\mu,\nu},\nu)\leq\sqrt[p]{1-s}d_p(\mu,\nu)$ can be proved in the same way using a similar combination of $\widetilde{\pi}$ and $\pi_{\nu}$ (which leaves $\nu$ undisturbed).
\end{proof}

In Proposition \ref{uj} we saw that $\xi_s^{\mu,\nu}\in B_s(\mu,\nu)$ for all $\mu,\nu\in\Wp(X)$. Our next aim is to find a metric characterization for those pairs such that $B_s(\mu,\nu)=\{\xi_s^{\mu,\nu}\}$.
\begin{proposition}\label{p:propAB} Let $(X,\varrho)$ be a countable graph metric space and let $p\geq1$ and $\alpha\in(0,1]$ be fixed real numbers. Then the following statements are equivalent:
\begin{itemize}
    \item[(i)] $\mu$ and $\nu$ are $\alpha$-neighbouring, that is, there exists an $\eta\in\mathcal{M}_+(X)$ and $u,v\in X$ with $\varrho(u,v)=1$ such that
    \begin{equation}\label{e:uv}
\mu=\eta+\alpha\delta_u\quad\mbox{and}\quad \nu=\eta+\alpha\delta_v.
\end{equation}
    \item[(ii)]
         $d_{p}(\mu, \nu)=\sqrt[p]{\alpha}$ and $B_s(\mu,\nu)=\{\xi_s^{\mu,\nu}\}$ for all $s\in (0,1).$
    \item[(iii)]
         $d_{p}(\mu, \nu)=\sqrt[p]{\alpha}$ and $B_{\frac{1}{2}}(\mu,\nu)=\{\xi_{\frac{1}{2}}^{\mu,\nu}\}$.
\end{itemize}
\end{proposition}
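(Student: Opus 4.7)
My plan is to establish the cycle (i) $\Rightarrow$ (ii) $\Rightarrow$ (iii) $\Rightarrow$ (i); the middle link is the specialisation $s=\tfrac{1}{2}$, so only the two nontrivial arrows need work.

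For (i) $\Rightarrow$ (ii), I would first compute $d_p(\mu,\nu)=\sqrt[p]{\alpha}$: the transport plan $\pi^{\ast}$ which leaves $\min(\mu,\nu)$ on the diagonal and sends $\alpha$ units along the edge $u\to v$ provides the upper bound, while a marginal count at $u$ (each of the $\alpha$ excess units pays cost $\geq 1$) furnishes the matching lower bound and also shows $\pi^{\ast}$ is the unique optimal plan for both $d_p^p$ and $d_1$. For the uniqueness of $\xi\in B_s(\mu,\nu)$, the conceptual input is the integer-metric pointwise inequality $\varrho^p\geq \varrho$, giving $d_p^p\geq d_1$; combining it with the $B_s$-hypothesis and the $d_1$-triangle inequality forces $d_1(\mu,\xi)=d_p^p(\mu,\xi)=s\alpha$ and $d_1(\xi,\nu)=d_p^p(\xi,\nu)=(1-s)\alpha$, and in particular any optimal couplings $\pi_1\in\Pi(\mu,\xi)$, $\pi_2\in\Pi(\xi,\nu)$ only move mass across single edges. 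Applying the gluing lemma then produces $\sigma\in\mathcal{M}_+(X^3)$ whose $(1,3)$-marginal is $d_1$-optimal, hence equals $\pi^{\ast}$, and saturation of the triangle inequality on $\mathrm{supp}(\sigma)$ restricts that support to $\{(x,x,x):x\in X\}\cup\{(u,u,v),(u,v,v)\}$. Reading off the middle marginal yields $\xi=\eta+(\alpha-\beta)\delta_u+\beta\delta_v$ for some $\beta\in[0,\alpha]$, and the $B_s$ bounds squeeze $\beta=s\alpha$, i.e.\ $\xi=\xi_s^{\mu,\nu}$.

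For (iii) $\Rightarrow$ (i), I would argue contrapositively by manufacturing a second element of $B_{1/2}(\mu,\nu)$ whenever $\mu,\nu$ are not $\alpha$-neighbouring. Fix an optimal plan $\widetilde{\pi}$ of cost $\alpha$; the marginal calculation above already forces $\widetilde{\pi}$ to leave $\min(\mu,\nu)$ on the diagonal and to place its remaining mass of total $\alpha$ on off-diagonal pairs. If some $(x,y)\in\mathrm{supp}(\widetilde{\pi})$ has $\varrho(x,y)=k\geq 2$, I would pick a geodesic neighbour $z$ of $x$ (so $\varrho(x,z)=1$) and, starting from the coupling $\tfrac{1}{2}\pi_\mu+\tfrac{1}{2}\widetilde{\pi}$ of Proposition \ref{uj}, divert the mass $w:=\widetilde{\pi}(x,y)$ through $z$: the modified plan has cost $\tfrac{\alpha}{2}-\tfrac{w}{2}k^p+w\leq\tfrac{\alpha}{2}$ (using $k^p\geq 2$), so its target $\xi'=\tfrac{\mu+\nu}{2}+w\delta_z-\tfrac{w}{2}(\delta_x+\delta_y)$ lies in $B_{1/2}(\mu,\nu)$ while differing from $\tfrac{\mu+\nu}{2}$ at $z$, contradicting (iii). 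If instead the off-diagonal support of $\widetilde{\pi}$ consists of at least two edges $(u_i,v_i)$ of masses $w_i$, the set of admissible ``displacement fractions'' $\{(t_i)\in[0,1]^{|E'|}:\sum_i t_i w_i=\tfrac{\alpha}{2}\}$ has positive dimension around the canonical choice $t_i=\tfrac{1}{2}$, so I can pick $(t_i)\neq(\tfrac12,\ldots,\tfrac12)$ whose associated measure $\eta+\sum_i[(1-t_i)w_i\delta_{u_i}+t_i w_i\delta_{v_i}]$ is both in $B_{1/2}(\mu,\nu)$ and distinct from $\tfrac{\mu+\nu}{2}$, again contradicting (iii). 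Hence $\widetilde{\pi}$ uses exactly one edge $u\to v$, which is precisely the representation in (i).

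The main obstacle is the cost and marginal bookkeeping in (iii) $\Rightarrow$ (i): verifying that the rerouted $\xi'$ and the perturbed $\xi''$ are genuine probability measures in $B_{1/2}(\mu,\nu)$ and genuinely differ from $\tfrac{\mu+\nu}{2}$, the last point requiring a small case analysis when two edges in $\mathrm{supp}(\widetilde{\pi})$ share a vertex. Everything else is a disciplined use of the gluing lemma and of the integer structure of the graph metric, the conceptual engine being the inequality $d_p^p\geq d_1$ that converts the $p$-Wasserstein uniqueness problem into a $1$-Wasserstein structural one.
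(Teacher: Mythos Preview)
Your (i)$\Rightarrow$(ii) via the pointwise inequality $\varrho^p\geq\varrho$ (hence $d_p^p\geq d_1$) together with the gluing lemma is correct and is a genuinely different route from the paper's direct squeeze on $\pi^*(u,u)$; it is arguably cleaner, at the price of invoking uniqueness of the $d_1$-optimal plan and the gluing lemma rather than keeping everything elementary.

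However, your (iii)$\Rightarrow$(i), Case~1, has a real gap. You only bound $d_p^p(\mu,\xi')$; membership in $B_{1/2}(\mu,\nu)$ also demands $d_p^p(\xi',\nu)\leq\tfrac{\alpha}{2}$, and your asymmetric $\xi'=\tfrac{\mu+\nu}{2}+w\delta_z-\tfrac{w}{2}(\delta_x+\delta_y)$ fails this in general. Take the path graph on $\{0,1,2,3\}$ with $p=1$, $\mu=(1-\beta)\delta_1+\beta\delta_0$, $\nu=(1-\beta)\delta_1+\beta\delta_3$ for small $\beta$, so $\alpha=3\beta$. With $(x,y)=(0,3)$, $k=3$, $w=\beta$, $z=1$, your $\xi'$ collapses to $\delta_1$, and $d_1(\xi',\nu)=2\beta=\tfrac{2\alpha}{3}>\tfrac{\alpha}{2}$. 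The reason is structural: a geodesic neighbour $z$ of $x$ sits at distance $k-1$ from $y$, and $(k-1)^p>\tfrac{k^p}{2}$ whenever $p$ is close to $1$ and $k\geq 3$, so the $\nu$-side cost cannot be controlled by the same rerouting. The paper avoids this by a \emph{symmetric} perturbation: with $c=\tfrac{w}{4}$ and a geodesic $x_0,\dots,x_k$, it sets $\xi=\xi_{1/2}^{\mu,\nu}-c\delta_{x_0}+c\delta_{x_1}+c\delta_{x_{k-1}}-c\delta_{x_k}$, so that each side's cost changes by $c\big((k-1)^p+1-k^p\big)\leq 0$ and both $B_{1/2}$ bounds survive. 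Since your Case~2 presupposes that all off-diagonal support pairs are edges (i.e.\ that Case~1 has already been dispatched), the implication (iii)$\Rightarrow$(i) does not go through as written.
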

\begin{proof}
\emph{(i)$\Longrightarrow$(ii):} First we show that $d_p(\mu,\nu)\geq\sqrt[p]{|\nu(\hat{x})-\mu(\hat{x})|}$ holds for all  $\mu,\nu\in\Wp(X)$ and $\hat{x}\in X$. By symmetry, we can assume without loss of generality that $\mu(\hat{x})\leq\nu(\hat{x})$. Then for any coupling $\pi\in\Pi(\mu,\nu)$ we have
\begin{equation}
\begin{split}
\nu(\hat{x})&=\sum_{x\in X}\pi(x,\hat{x})=\pi(\hat{x},\hat{x})+\sum_{\substack{x\in X \\ x\neq\hat{x}}}\pi(x,\hat{x})\\
&\leq\sum_{y\in X}\pi(\hat{x},y)+\sum_{\substack{x\in X \\ x\neq\hat{x}}}\pi(x,\hat{x})=\mu(\hat{x})+\sum_{\substack{x\in X \\ x\neq\hat{x}}}\pi(x,\hat{x}).
\end{split}
\end{equation}
which implies
\begin{equation}\nu(\hat{x})-\mu(\hat{x})\leq\sum_{\substack{x\in X \\ x\neq\hat{x}}}\pi(x,\hat{x}).
\end{equation}
Since $x\neq y$ implies $\varrho^p(x,y)\geq1$, we have the following lower bound for the cost of $\pi$
\begin{equation}
    \nu(\hat{x})-\mu(\hat{x})\leq\sum_{\substack{x\in X \\ x\neq\hat{x}}}\pi(x,\hat{x})\leq\sum_{x\in X}\varrho^p(x,\hat{x})\cdot\pi(x,\hat{x})\leq\sum_{(x,y)\in X\times X}\varrho^p(x,y)\cdot\pi(x,y).
\end{equation}
By taking the minimum over $\Pi(\mu,\nu)$, one gets $\nu(\hat{x})-\mu(\hat{x})\leq d_p^p(\mu,\nu).$ In \eqref{e:uv} we have $\nu(v)-\mu(v)=\alpha$, and therefore $d_p(\mu,\nu)\geq\sqrt[p]{\alpha}$ holds. To see the reverse inequality, observe first that the following $\pi\in\Pi(\mu,\nu)$ has cost $\alpha$
\[
\pi(x,y)= \left\{
\begin{array}{ll}
      \mu(x) & \mbox{if}~~x=y~~\mbox{and}~~x\neq u, \\
      \mu(x)-\alpha &\mbox{if}~~x=y=u,\\
      \alpha &\mbox{if}~~x=u~~\mbox{and}~~y=v,\\
      0 & \mbox{otherwise.}\\
\end{array}
\right.
\]
And therefore, $d_p(\mu,\nu)\leq\sqrt[p]{\alpha}$. Now assume that $\xi\in B_s(\mu,\nu)$. We have to show that $\xi=\xi_{s}^{\delta_u,\delta_v}=\eta+(1-s)\alpha\delta_u+s\alpha\delta_v$. Let $\pi^*\in\Pi(\mu,\xi)$ be an optimal transport plan, i.e.
\begin{equation}\label{ximu}
\alpha s\geq d_p^p(\mu,\xi)=\sum_{(x,y)\in X\times X}\varrho^p(x,y)\cdot\pi^*(x,y).
\end{equation}
By adding $\pi^*(u,u)-\pi^*(u,u)=0$, the right hand side in \eqref{ximu} can be written as
\begin{equation}
\sum_{\substack{(x,y)\in X\times X\\ x\neq u}}\varrho^p(x,y)\cdot\pi^*(x,y)+\left[\sum_{y\in X}\varrho^p(u,y)\cdot\pi^*(u,y)+\pi^*(u,u)\right]-\pi^*(u,u),
\end{equation}
which gives
\begin{equation}\label{i:1}
\alpha s\geq\mu(u)-\pi^*(u,u),
\end{equation}
because
\begin{equation}
\sum_{\substack{(x,y)\in X\times X\\ x\neq u}}\varrho^p(x,y)\cdot\pi^*(x,y)\geq0,
\end{equation}
and if $u\neq y$, then $\varrho(u,y)\geq1$, and thus
\begin{equation}
\sum_{y\in X}\varrho^p(u,y)\cdot\pi^*(u,y)+\pi^*(u,u)\geq\sum_{y\in X}\pi^*(u,y)=\mu(u).
\end{equation}
Since $\mu(u)=\eta(u)+\alpha$ according to  \eqref{e:uv}, we can rearrange \eqref{i:1} as
\begin{equation}
\pi^*(u,u)\geq\alpha(1-s)+\eta(u).\end{equation}
Using that $\pi^*\in\Pi(\mu,\xi)$ we get $\xi(u)=\sum_{x\in X}\pi^*(x,u)\geq\pi^*(u,u)$, and thus
\begin{equation}\label{xiu}
\xi(u)\geq\pi^*(u,u)\geq\eta(u)+(1-s)\alpha\delta_u(u).
\end{equation}
Combination of \eqref{ximu} and \eqref{xiu} asserts now that
\begin{equation}\label{aa}
s\alpha\geq d_p^p(\mu,\xi)\geq\big|\xi(u)-\mu(u)|\geq|\eta(u)+(1-s)\alpha-\big(\eta(u)+\alpha\delta_u(u)\big)\big|=s\alpha.
\end{equation}
Furthermore, a very similar calculation with $\xi$ and $\nu$ gives
\begin{equation}\label{bb}
(1-s)\alpha\geq d_p^p(\nu,\xi)\geq|\xi(v)-\nu(v)|\geq\big|\eta(v)+s\alpha-\big(\eta(v)+\alpha\delta_v(v)\big)\big|=(1-s)\alpha.
\end{equation}
Since every inequality in \eqref{aa} and \eqref{bb} is actually an equality, we get
\begin{equation}
\xi(u)=\eta(u)+(1-s)\alpha\delta_u(u)\qquad\mbox{and}\qquad\xi(v)=\eta(v)+s\alpha\delta_v(v)
\end{equation}
which together with $d_p^p(\mu,\xi)=|\mu(u)-\xi(u)|$ and $d_p^p(\xi,\nu)=|\xi(v)-\nu(v)|$ imply that \begin{equation}
    \xi=\eta+(1-s)\alpha\delta_u+s\alpha\delta_v=\xi_{s}^{\delta_u,\delta_v}.
    \end{equation}
This proves that (i)$\Longrightarrow$ (ii).\\

\noindent\emph{(ii)$\Longrightarrow$(iii):} This implication is straightforward.\\

\noindent\emph{(iii)$\Longrightarrow$(i):} We have to show that if $d_p(\mu,\nu)=\sqrt[p]{\alpha}$ and $B_{\frac{1}{2}}(\mu,\nu)=\{\xi_{\frac{1}{2}}^{\mu,\nu}\}$, then \eqref{e:uv} holds. Let $\pi\in\Pi(\mu,\nu)$ be an optimal transport plan
\begin{equation}\label{BtoA}
\alpha=d_p^p(\mu,\nu)=\sum_{\substack{(x,y)\in\supp\pi\\ x\neq y}}\varrho^p(x,y)\cdot\pi(x,y).
\end{equation}
and recall that $\pi_{\frac{1}{2}}:=\frac{1}{2}\pi_{\mu}+\frac{1}{2}\pi\in \Pi(\mu,\xi_{\frac{1}{2}}^{\mu,\nu})$.
First assume indirectly that there exists an $(x',y')\in\supp\pi$ for which $k:=\varrho(x',y')>1$. Let us choose a path of length $k$ between $x'$ and $y'$ along the vertices $x_0=x', x_1,\dots,x_{k_1},x_k=y'$. Set $c=\frac{\pi(x',y')}{4}$ and modify $\xi_{\frac{1}{2}}^{\mu,\nu}$ along this path as
\begin{equation}
\xi:=\xi_{\frac{1}{2}}^{\mu,\nu}-c\delta_{x_0}+c\delta_{x_1}+c\delta_{x_{k-1}}-c\delta_{x_k}.
\end{equation}
  	\begin{figure}[H]
	\centering
	\includegraphics[scale = 0.6]{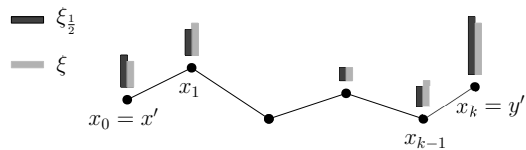}
	\caption{An illustration of how to modify $\xi_{\frac{1}{2}}^{\mu,\nu}$ in order to get $\xi$.}
	\end{figure}
Since $k>1$ and $p\geq1$, we have $\xi\neq\xi_{\frac{1}{2}}^{\mu,\nu}$ and it follows from the construction that
\begin{equation}
d_p^p(\mu,\xi)\leq d_p^p(\mu,\xi_{\frac{1}{2}}^{\mu,\nu})-k^pc+(k-1)^pc+1^pc\leq d_p^p(\mu,\xi_{\frac{1}{2}}^{\mu,\nu})\leq\frac{\alpha}{2}.
\end{equation}
A similar calculation shows that $d_p^p(\xi,\nu)\leq\frac{\alpha}{2}$, and thus $\xi\in B_{\frac{1}{2}}(\mu,\nu)$, a contradiction.
This contradiction means that $\varrho(x,y)=1$ for all $(x,y)\in\supp\pi$ in \eqref{BtoA}.

Now assume indirectly that $\supp\pi$ has at least two different elements $(x_1,y_1)$ and $(x_2,y_2)$. Set $\widetilde{c}:=\frac{\mathrm{min}\{\pi(x_1,y_1),\pi(x_2,y_2)\}}{4}$ and modify $\xi_{\frac{1}{2}}^{\mu,\nu}$ as follows
\begin{equation}
\xi:=\xi_{\frac{1}{2}}^{\mu,\nu}+\widetilde{c}\delta_{x_1}+\widetilde{c}\delta_{y_1}-\widetilde{c}\delta_{x_2}-\widetilde{c}\delta_{y_2}.
\end{equation}
Again, we have that $\xi\neq\xi_{\frac{1}{2}}^{\mu,\nu}$. In order to give an upper bound for $d_p^p(\mu,\xi)$, let us define the coupling $\widetilde{\pi}\in \Pi(\mu,\xi)$ by modifying $\pi_{\frac{1}{2}}=\frac{1}{2}\pi_{\mu}+\frac{1}{2}\pi\in \Pi(\mu,\xi_{\frac{1}{2}}^{\mu,\nu})$ as follows
\[
\widetilde{\pi}(x,y)= \left\{
\begin{array}{ll}
      \pi_{\frac{1}{2}}(x_1,y_1)+\widetilde{c} & \mbox{if}~(x,y)=(x_1,y_1),\\
      \pi_{\frac{1}{2}}(x_2,y_2)-\widetilde{c} & \mbox{if}~(x,y)=(x_2,y_2),\\
      \pi_{\frac{1}{2}}(x,y) & \mbox{otherwise}. \\
\end{array}
\right.
\]
Since $\varrho(x,y)=1$ for all $(x,y)\in\supp{\pi}$ and $\pi_{\mu}(x,y)\varrho(x,y)=0$ for all $x,y\in X$, we have
\begin{equation}
d_p^p(\mu,\xi)\leq\sum_{\substack{(x,y)\in\supp{\widetilde{\pi}}\\ x\neq y}}\widetilde{\pi}(x,y)=\sum_{\substack{(x,y)\in\supp{\pi}\\ x\neq y}}\frac{\pi(x,y)}{2}=\frac{\alpha}{2}.
\end{equation}
Similarly, $d_p^p(\xi,\nu)\leq\frac{\alpha}{2}$, and thus $\xi$ belongs to $B_{\frac{1}{2}}(\mu,\nu)$, a contradiction. The only remaining possibility is that there exists $u,v\in X$ such that $\varrho(u,v)=1$ and \eqref{BtoA} can be written as $\alpha=d_p^p(\mu,\nu)=\varrho^p(u,v)\cdot\pi(u,v)=\pi(u,v)$, which means exactly that \eqref{e:uv} holds. This proves (i)$\Longleftrightarrow$(ii).
\end{proof}

\begin{corollary}\label{c:n-preserver} Let $(X,\varrho)$ be a countable graph metric space and let $p\geq1$ and $\alpha\in(0,1]$ be fixed real numbers. For any isometry $\Phi:\Wp(X)\to\Wp(X)$ and for any pair of measures $\mu,\nu\in\Wp(X)$ the following holds
\begin{equation}\label{f:n-preserver}
    \mu\equiv\nu~[\alpha]\qquad\Longleftrightarrow\qquad\Phi(\mu)\equiv\Phi(\nu)~[\alpha].
\end{equation}
\end{corollary}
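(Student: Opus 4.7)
The corollary is essentially a bookkeeping consequence of Proposition \ref{p:propAB}, whose whole point is to characterize the $\alpha$-neighbouring relation in purely metric terms. My plan is to invoke the equivalence (i)$\Longleftrightarrow$(iii) from that proposition and then observe that each of its two metric ingredients is transported faithfully by any isometry.

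First I would rephrase (iii) in the slightly weaker but equivalent form $d_p(\mu,\nu)=\sqrt[p]{\alpha}$ and $|B_{\frac{1}{2}}(\mu,\nu)|=1$. This is legitimate because Proposition \ref{uj} guarantees that $\xi_{\frac{1}{2}}^{\mu,\nu}\in B_{\frac{1}{2}}(\mu,\nu)$, so whenever $B_{\frac{1}{2}}(\mu,\nu)$ is a singleton it automatically equals $\{\xi_{\frac{1}{2}}^{\mu,\nu}\}$. Now the first ingredient, $d_p(\mu,\nu)=\sqrt[p]{\alpha}$, is immediately preserved because $\Phi$ is a distance-preserving bijection.

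For the second ingredient, I would verify that $\Phi\bigl(B_s(\mu,\nu)\bigr)=B_s(\Phi(\mu),\Phi(\nu))$ for every $s\in(0,1)$; this is a direct unpacking of the definition of $B_s$ together with the fact that $\Phi$ preserves all three relevant distances and is bijective. Consequently $|B_{\frac{1}{2}}(\mu,\nu)|=|B_{\frac{1}{2}}(\Phi(\mu),\Phi(\nu))|$, and in particular the singleton property is preserved. Combining the two observations, the metric characterization (iii) holds for the pair $(\mu,\nu)$ if and only if it holds for $(\Phi(\mu),\Phi(\nu))$. Applying Proposition \ref{p:propAB} on both sides yields \eqref{f:n-preserver} in the forward direction. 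For the converse one simply repeats the argument with $\Phi^{-1}$, which is again an isometry of $\Wp(X)$.

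There is no real obstacle here: the work has already been done in Proposition \ref{p:propAB}, and the only thing to watch is that the characterization used is genuinely metric (involves only $d_p$-distances and cardinalities of $d_p$-defined sets), so that it automatically transfers under an arbitrary isometry. The corollary is therefore short and formal.
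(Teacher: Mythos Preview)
Your argument is correct and follows the same route as the paper: both use that an isometry maps $B_s(\mu,\nu)$ bijectively onto $B_s(\Phi(\mu),\Phi(\nu))$, so the singleton condition in Proposition~\ref{p:propAB} is preserved, and the distance condition is trivially preserved. Your write-up is simply a more detailed version of the paper's two-sentence proof, with the helpful explicit remark (via Proposition~\ref{uj}) that ``$B_{\frac12}(\mu,\nu)$ is a singleton'' is equivalent to the \emph{a priori} non-metric statement ``$B_{\frac12}(\mu,\nu)=\{\xi_{\frac12}^{\mu,\nu}\}$''.
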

\begin{proof}
Since $\Phi$ and $\Phi^{-1}$ are both distance preserving bijections, we have that $B_s(\mu,\nu)$ is a singleton if and only if $B_s\big(\Phi(\mu),\Phi(\nu)\big)$ is a singleton. According to Proposition \ref{p:propAB}, this implies that $\mu\equiv\nu~[\alpha]$ if and only if $\Phi(\mu)\equiv\Phi(\nu)~[\alpha]$.
\end{proof}

\section{The main result}

Now we are ready to state and prove the main result of the paper.

\begin{theorem}\label{t:main}
Let $(X,\varrho)$ be a countable graph metric space and let $p\geq1$ be fixed. Then the $p$-Wasserstein space $\Wp(X)$ is isometrically rigid, i.e., $\mathrm{Isom}(\Wp(X),d_p)\cong\mathrm{Isom}(X,\varrho)$.
\end{theorem}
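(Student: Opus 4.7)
The plan is to follow the three-step strategy sketched in the introduction, using Corollary \ref{c:n-preserver} as the key technical tool throughout.

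For \emph{Step 1}, I would apply Corollary \ref{c:n-preserver} with $\alpha = 1$. The relation $\mu \equiv \nu~[1]$ holds precisely when $\mu = \delta_u$ and $\nu = \delta_v$ for some adjacent vertices $u,v$. Connectedness of $G(X,E)$ ensures every $\delta_x$ is $1$-neighbouring to some $\delta_y$ (the trivial case $|X| = 1$ aside), so any isometry $\Phi$ must send each Dirac mass to a Dirac mass. This yields $f \in \mathrm{Isom}(X,\varrho)$ with $\Phi(\delta_x) = \delta_{f(x)}$, because $d_p(\delta_x,\delta_y) = \varrho(x,y)$. Replacing $\Phi$ by $\widetilde{\Phi} := (f^{-1})_\# \circ \Phi$, the task reduces to showing that an isometry of $\Wp(X)$ fixing every Dirac mass is the identity.

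For the base case of \emph{Step 2}, I would show that $\widetilde{\Phi}$ fixes every measure of the form $\mu = (1-\alpha)\delta_u + \alpha \delta_v$ with $\varrho(u,v) = 1$ and $\alpha \in (0,1)$. Such a $\mu$ is both $\alpha$-neighbouring to $\delta_u$ and $(1-\alpha)$-neighbouring to $\delta_v$, and both endpoints are fixed, so Corollary \ref{c:n-preserver} forces $\widetilde{\Phi}(\mu)$ to admit representations $(1-\alpha)\delta_u + \alpha\delta_{v'}$ with $\varrho(u,v')=1$ and $(1-\alpha)\delta_{u'} + \alpha\delta_v$ with $\varrho(u',v)=1$. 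Since $u \ne v'$ and $v \ne u'$, comparing the probability mass functions at $u$ and $v$ pins down $u' = u$ and $v' = v$, giving $\widetilde{\Phi}(\mu) = \mu$.

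The main obstacle is extending fixedness to arbitrary $\mu \in \mathcal{F}(X)$. I plan to induct on $|\supp{\mu}|$: for $\mu = \sum_{i=1}^n \lambda_i\delta_{x_i}$ with $n \geq 3$, the idea is to exhibit enough neighbouring partners $\nu$ -- of strictly smaller or inductively controlled complexity -- so that the simultaneous constraints $\widetilde{\Phi}(\mu) \equiv \nu~[\beta]$ from Corollary \ref{c:n-preserver} isolate $\mu$ as the only possible image. The delicacy is that a given $\nu$ usually admits several decompositions $\nu = \eta + \beta\delta_{v'}$, so multiple partners and multiple values of $\beta \in (0,1]$ must be combined to rule out spurious images; connectedness of $G(X,E)$ is precisely what makes such reductions always available. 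Once $\widetilde{\Phi}$ is known to fix every element of $\mathcal{F}(X)$, \emph{Step 3} closes the argument: density of $\mathcal{F}(X)$ in $\Wp(X)$ together with continuity of the isometry $\widetilde{\Phi}$ gives $\widetilde{\Phi} = \mathrm{id}_{\Wp(X)}$, which combined with Proposition \ref{embedding} yields $\mathrm{Isom}(\Wp(X),d_p) \cong \mathrm{Isom}(X,\varrho)$.
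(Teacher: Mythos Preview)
Your Steps~1 and~3 and the two-point base case are essentially correct and match the paper (the paper handles the base case via the singleton property of $B_s(\delta_{x_1},\delta_{x_2})$ from Proposition~\ref{p:propAB}, but your direct comparison of mass functions works equally well).

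The gap is the inductive Step~2. Induction on $|\supp{\mu}|$ does not get off the ground: a neighbouring partner of $\mu$ need not have smaller support. On the path $1\text{--}2\text{--}3\text{--}4\text{--}5$ with $\mu=\tfrac13(\delta_1+\delta_3+\delta_5)$, no two support points are adjacent, so every $\nu$ with $\mu\equiv\nu~[\alpha]$ satisfies $|\supp{\nu}|\geq 3$; you never reach a measure already known to be fixed. The paper inducts instead along an enumeration $X_1\subset X_2\subset\cdots$ of $X$ chosen so that each $x_{n+1}$ is adjacent to some $u\in X_n$, with inductive hypothesis that $\Phi$ fixes everything supported in $X_n$. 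Sliding the mass at $x_{n+1}$ onto $u$ then produces a partner $\mu^*$ supported in $X_n$, hence fixed.

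Even with the right induction scheme, the ambiguity you flag (``several decompositions'') is genuine and is not resolved merely by combining more partners. The paper's device is to restrict to \emph{generic} $\mu$: all nonzero weights distinct, and no weight equal to the sum of two others. These conditions are exactly what force the vertex $v$ appearing in $\Phi(\mu_*)=\mu^*-c\delta_v+c\delta_w$ to coincide with $u$, after a separate curve-extension argument shows $\mu^*(v)=c=\mu^*(u)$. Pinning down $w=x_{n+1}$ then requires an additional direct distance computation against the fixed Dirac $\delta_{x_{n+1}}$, not just Corollary~\ref{c:n-preserver}. Finally one checks that such generic measures are dense in $\mathcal{F}(X)$, so Step~3 still closes. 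Your outline is missing the genericity restriction, the curve argument, and the auxiliary distance computation; without them the inductive step does not go through.
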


\begin{proof}
We have seen in Proposition \ref{embedding} that $\#:\mathrm{Isom}(X,\varrho)\to\mathrm{Isom}(\Wp(X),d_p)$ is a group homomorphism. Therefore, it is enough to prove that it is surjective, i.e., for any $\Phi\in\mathrm{Isom}(\Wp(X),d_p)$ there exists a $\psi\in\mathrm{Isom}(X,\varrho)$ such that $\Phi=\psi_\#$. The strategy of proof is similar to the sketch (Step 1--3.) mentioned in the introduction.\\

\noindent\underline{Step 1.} First we prove that $\Phi$ maps the set of Dirac masses onto itself. Assume that $\mu=\delta_u$ is a Dirac measure and choose a $v\in X$ such that $\varrho(u,v)=1$. Since $\delta_u\equiv\delta_v~[1]$, we have $\Phi(\delta_u)\equiv\Phi(\delta_v)~[1]$ according to Corollary \ref{c:n-preserver}. That is, $\Phi(\mu)=\delta_{\hat{u}}$ and $\Phi(\nu)=\delta_{\hat{v}}$ for some $\hat{u},\hat{v}\in X$ with $\varrho(\hat{u},\hat{v})=1$. In particular, $\Phi(\mu)$ is a Dirac measure. Since $d_p(\delta_x,\delta_y)=\varrho(x,y)$ for all $x,y\in X$ and $\Phi^{-1}$ is an isometry as well, we see that $\Phi$ maps the set of Dirac masses bijectively onto itself, and the function $\psi:X\to X$ defined by \begin{equation}
\Phi(\delta_x):=\delta_{\psi(x)}\qquad(x\in X)
\end{equation}
is an isometry. Let us consider the isometry $\widetilde{\Phi}:=\psi^{-1}_{\#}\circ\Phi$. On the one hand, $\widetilde{\Phi}$ fixes all Dirac measures. On the other hand, if $\widetilde\Phi(\mu)=\mu$ for all $\mu\in\Wp(X)$, then $\Phi=\psi_\#$. Therefore we can assume without loss of generality that $\psi(x)=x$, and hence $\Phi(\delta_x)=\delta_x$ for all $x\in X$.\\

\noindent\underline{Step 2.} Our next task is to prove that $\Phi$ leaves a dense set of finitely supported measures fixed. We do this by induction with respect to the prescribed location of the supports in question. Since the underlying graph $G(X,E)$ is connected, starting with an arbitrary $x_1\in X$ we can enumerate $X$ in a way that for all $n\geq2$ the vertex $x_n$ is connected to the initial segment $X_{n-1}=\{x_1,\dots,x_{n-1}\}$ with at least one edge. If $\mu\in\mathcal{F}(X)$, then $\supp\mu\subseteq X_N$ for some large enough $N\in\mathbb{N}$, and therefore it is enough to show that measures supported in $X_n$ are fixed by $\Phi$ for all $n\in\mathbb{N}$.\\

If $n=1$, then $\supp{\mu}\subseteq X_1$ implies $\mu=\delta_{x_1}$, and $\Phi(\delta_{x_1})=\delta_{x_1}$ according to Step 1.\\

If $n=2$, then $\supp{\mu}\subseteq X_2$ implies $\mu=(1-s)\delta_{x_1}+s\delta_{x_2}$ for some $s\in[0,1]$. If $s=0$ or $s=1$ then $\mu$ is a Dirac measure and thus $\Phi(\mu)=\mu$. Assume now that $0<s<1$. In this case, $\mu=\xi_s^{\delta_{x_1},\delta_{x_2}}$ and we know from Proposition \ref{uj} that $\xi_s^{\delta_{x_1},\delta_{x_2}}\in B_s(\delta_{x_1},\delta_{x_2})$. Moreover, $\Phi(\xi_s^{\delta_{x_1},\delta_{x_2}})\in B_s(\delta_{x_1},\delta_{x_2})$ holds as well, because $\Phi(\delta_{x_i})=\delta_{x_i}$ for $i=1,2$, and thus
\begin{equation}
    d_p(\delta_{x_1},\Phi(\mu))=d_p(\Phi(\delta_{x_1}),\Phi(\mu))=d_p(\delta_{x_1},\mu)\leq\sqrt[p]{s}d_p(\delta_{x_1},\delta_{x_2}),
\end{equation}
and
\begin{equation}
    d_p(\Phi(\xi_s^{\delta_{x_1},\delta_{x_2}}),\delta_{x_2})=d_p(\Phi(\xi_s^{\delta_{x_1},\delta_{x_2}}),\Phi(\delta_{x_2}))=d_p(\xi_s^{\delta_{x_1},\delta_{x_2}},\delta_{x_2})\leq\sqrt[p]{1-s}d_p(\delta_{x_1},\delta_{x_2}).
\end{equation}
But $\delta_{x_1}\equiv\delta_{x_2}~[1]$, and thus $B_s(\delta_{x_1},\delta_{x_2})$ is a singleton according to Proposition \ref{p:propAB}. This implies that $\Phi(\xi_s^{\delta_{x_1},\delta_{x_2}})=\xi_s^{\delta_{x_1},\delta_{x_2}}$.\\

Assume now that $\Phi(\zeta)=\zeta$ holds whenever $\supp\zeta\subseteq X_n=\{x_1,\dots,x_n\}$, and choose a finitely supported measure $\mu=\sum_{i=1}^{n+1}\mu(x_i)\delta_{x_i}$ which satisfies the following two properties:
\begin{equation}\label{p1}
\mbox{for all}~~ x,y\in\supp\mu: \quad x\neq y~~\mbox{implies}~~\mu(x)\neq\mu(y),
\end{equation}
and
\begin{equation}\label{p2}
\mbox{for all pairwise different elements}~~ x,y,z\in\supp\mu:\quad \mu(x)+\mu(y)\neq\mu(z).
\end{equation}
If $\mu(x_{n+1})=0$, then $\supp\mu\subseteq X_n$, and if $\mu(x_{n+1})=1$, then $\mu=\delta_{x_{n+1}}$. In both cases, $\Phi(\mu)=\mu$ according to the inductive hypothesis and Step 1. So it remains to deal with the case $0<\mu(x_{n+1})<1$.
 According to the construction, there exists an $u\in X_n$ such that $\varrho(u, x_{n+1})=1$. Set $c:=\mu(u)+\mu(x_{n+1})$ and define two (in a sense extremal) measures $\mu_*$ and $\mu^*$ as follows
\begin{equation}\mu_*:=\mu-\mu(u)\delta_{u}+\mu(u)\delta_{x_{n+1}}\quad\mbox{and}\quad \mu^*:=\mu-\mu(x_{n+1})\delta_{x_{n+1}}+\mu(x_{n+1})\delta_{u}.
\end{equation}
Observe that $\mu_*\equiv\mu^*~[c]$, and $\supp{\mu^*}\subseteq X_n$ implies $\Phi(\mu^*)=\mu^*$. Furthermore, we have
\begin{equation}\label{muprop}\mu_*(u)=\mu^*(x_{n+1})=0,\qquad\mu_*(x_{n+1})=\mu^*(u)=c
\end{equation}
and
\begin{equation}\label{nb}
    \mu_*\equiv\mu~[\mu(u)],\qquad\mu^*\equiv\mu~[\mu(x_{n+1})].
\end{equation}
Let us define a curve $\gamma$ which connects $\mu^*$ and $\mu_*$ (see Figure \ref{teleport} below)
\begin{equation}\label{gamma}
\gamma:[0,c]\to\mathcal{W}_p(X);\qquad\gamma(t):=\mu^*+t\delta_{x_{n+1}}-t\delta_u.
\end{equation}
\begin{figure}[H]
\centering
\includegraphics[scale = 0.4]{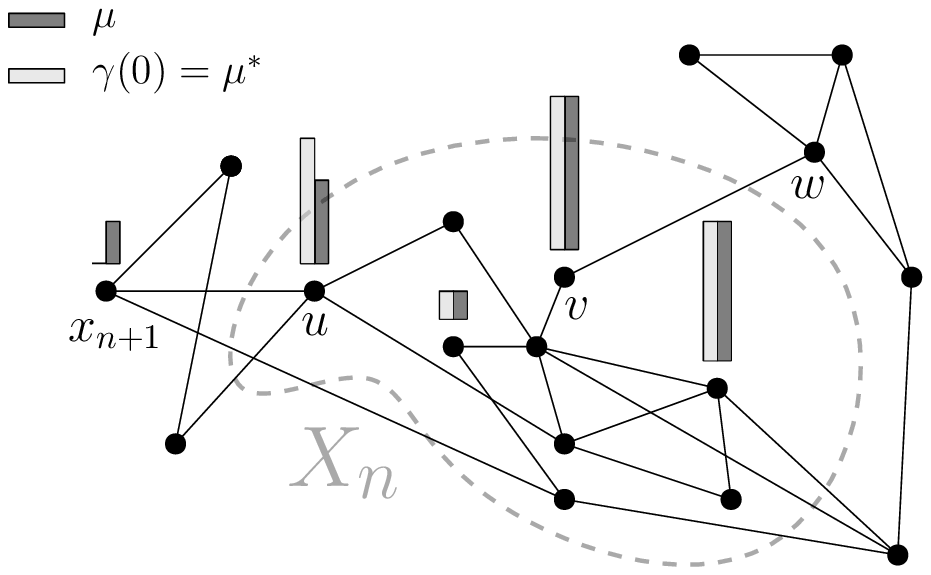}\quad
\includegraphics[scale = 0.4]{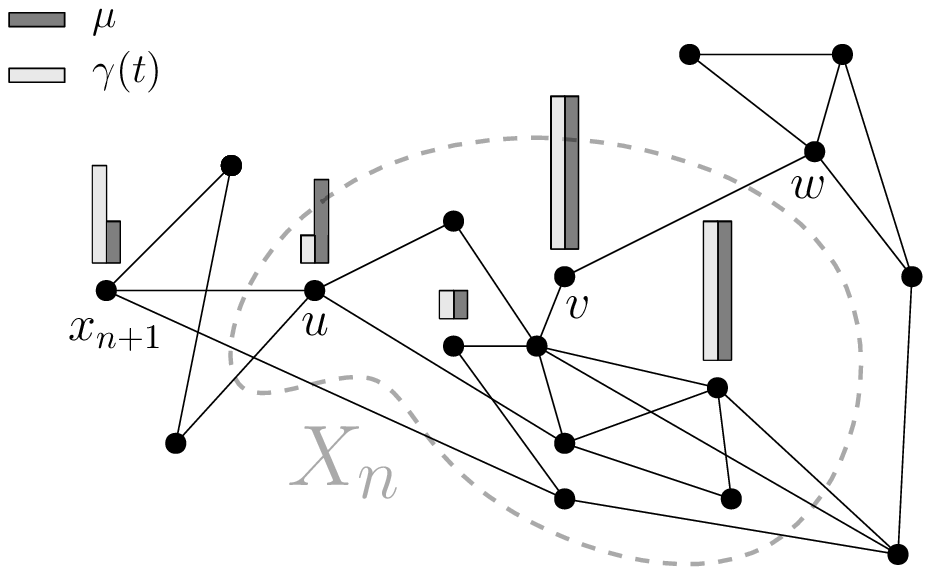}\quad
\includegraphics[scale = 0.4]{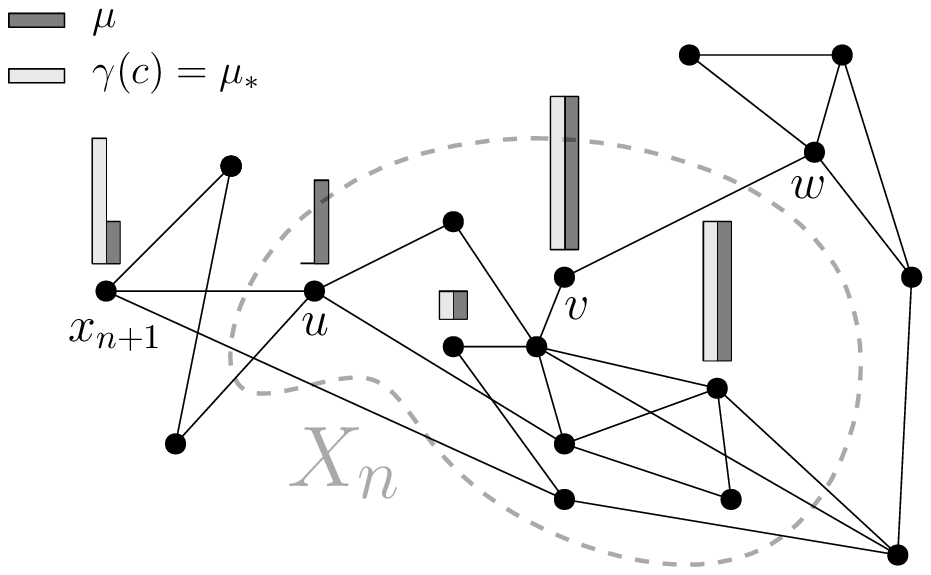}
\caption{An illustration of teleporting $\mu^*$ to $\mu_*$ along the curve $\gamma$.}
\label{teleport}
\end{figure}
Observe that $\gamma(0)=\mu^*$, $\gamma\big(\mu(x_{n+1})\big)=\mu$, $\gamma(c)=\mu_*$, and that
\begin{equation}\label{gteq}
\gamma(t)\equiv\mu~[|t-\mu(x_{n+1})|]\quad\mbox{for all}\quad t\in[0,c]\setminus\{x_{n+1}\}.
\end{equation}

According to Corollary \ref{c:n-preserver}, $\mu_*\equiv\mu^*~[c]$ implies $\Phi(\mu_*)\equiv\Phi(\mu^*)~[c]$, and thus we can issue a curve $\Gamma$ from $\Phi(\mu^*)$ to $\Phi(\mu_*)$ with a structure similar to that of $\gamma$. Recall that $\supp{\mu^*}\subseteq X_n$ implies $\Phi(\mu^*)=\mu^*$, and therefore $\Phi(\mu_*)\equiv\Phi(\mu^*)~[c]$ can be written as  $\Phi(\mu_*)\equiv\mu^*~[c]$. Proposition \ref{p:propAB} indicates that there exist a $v\in\supp{\mu^*}$ and a $w\in X$ with $\varrho(v,w)=1$ such that $\Phi(\mu_*)=\mu^*-c\delta_v+c\delta_w$. Therefore the natural way to connect $\mu^*=\Phi(\mu^*)$ and $\Phi(\mu_*)$ with a curve is
\begin{equation}\label{Gamma}
\Gamma:[0,c]\to\Wp(X);\qquad \Gamma(t):=\mu^*-t\delta_v+t\delta_w.
\end{equation}
It follows from \eqref{f:n-preserver} and \eqref{nb} that $\Phi(\mu)\equiv\Phi(\mu^*)~[\mu(x_{n+1})]$ and $\Phi(\mu)\equiv\Phi(\mu_*)~[\mu(u)]$. And thus, $\Phi(\mu)$ must have the form $\mu^*-t\delta_v+t\delta_w$ for some $t\in(0,c)$. In fact, $\Phi(\mu)=\Gamma\big(\mu(x_{n+1}))$. And similarly, it follows form \eqref{gteq} and Corollary \ref{c:n-preserver} that the $\Phi$-image of $\gamma$ is $\Gamma$.

We need more, in fact we want to prove that $\gamma=\Gamma$, or equivalently, $x_{n+1}=w$ and $u=v$. To show that $u=v$, it is enough to prove that $\mu^*(u)=\mu^*(v)$. Indeed, assume indirectly that $\mu^*(u)=\mu^*(v)$, but $u\neq v$. First recall that $u\neq v$ implies $\mu^*(v)=\mu(v)$. If $\mu(u)=0$, then
\begin{equation}
    0<\mu(x_{n+1})=\mu(x_{n+1})+\mu(u)=\mu^*(u)=\mu^*(v)=\mu(v)
\end{equation}
which contradicts \eqref{p1}, unless $v=x_{n+1}$. But $v=x_{n+1}$ is impossible because $v\in X_n$ and $x_{n+1}\notin X_n$. If $\mu(u)\neq0$, then $\mu(v)=\mu^*(v)=\mu^*(u)>\mu(u)>0$, and thus $u,v,x_{n+1}$ are pairwise different elements of $\supp{\mu}$ such that \begin{equation}\mu(v)=\mu^*(v)=\mu^*(u)=\mu(u)+\mu(x_{n+1}),
\end{equation}
which contradicts \eqref{p2}. Now we know that if $\mu^*(u)=\mu^*(v)$, then $u=v$. In this case, $\Phi(\mu)$ can be written as $\Phi(\mu)=\mu^*-\mu(x_{n+1})\delta_u+\mu(x_{n+1})\delta_w$, and thus
\begin{equation}\label{1a}
    d_p^p(\delta_{x_{n+1}},\Phi(\mu))=d_p^p(\delta_{x_{n+1}},\mu^*)-\mu(x_{n+1})+\varrho^p(x_{n+1},w)\mu(x_{n+1}).
\end{equation}
Moreover, we have that $d_p(\delta_{x_{n+1}},\Phi(\mu))=d_p(\Phi(\delta_{x_{n+1}}),\Phi(\mu))=d_p(\delta_{x_{n+1}},\mu)$, and that
\begin{equation}\label{1b}
d_p^p(\delta_{x_{n+1}},\mu)=d_p^p(\delta_{x_{n+1}},\mu^*)-\mu(x_{n+1}).
\end{equation}
Now we can conclude from \eqref{1a} and \eqref{1b} that $\varrho^p(x_{n+1},w)\mu(x_{n+1})=0$. Since $\mu(x_{n+1})\neq0$, we get $x_{n+1}=w$.

What remains to prove in this step is that $c=\mu^*(u)=\mu^*(v)$. On the one hand, we know from \eqref{muprop} that $\mu^*(u)=c$. On the other hand, we have $\mu^*(v)\geq c$, because
\begin{equation}
    0\leq\Phi(\mu^*)(v)=\mu^*(v)-c\delta_v(v)+c\delta_w(v)=\mu^*(v)-c.
\end{equation}
Assume indirectly that $\mu^*(v)>c$. In this case, we can extend $\Gamma$ from $[0,c]$ to $[0,\mu^*(v)]$ by
\begin{equation}\label{gammahullam}
\widetilde{\Gamma}:[0,\mu^*(v)]\to\Wp(X);\qquad\widetilde{\Gamma}(t):=\mu^*-t\delta_v+t\delta_w.
\end{equation}
This extension has the property that
\begin{equation}\Phi(\mu)\equiv\widetilde{\Gamma}(t)~[|t-\mu(x_{n+1})|]\quad\mbox{for all }\quad t\in[0,\mu^*(v)]\setminus\{\mu(x_{n+1})\}.
\end{equation}
Since $\Phi^{-1}$ is an isometry as well, Corollary \ref{c:n-preserver} says that $\Phi^{-1}\big(\widetilde{\Gamma}(t)\big)\equiv\mu~[|t-\mu(x_{n+1}|]$ holds for all $t\in[0,\mu^*(v)]\setminus\{\mu(x_{n+1})\}$, and thus $\gamma$ can be extended through $\gamma(c)=\mu_*$ with measures which are all in neighbouring relation with $\mu$, a contradiction. Indeed, in order to continue $\gamma$, we need to add more weight to $x_{n+1}$. But $\mu_*(u)=0$, so we should teleport mass from a point $x\in X\setminus\{x_{n+1},u\}$ which would ruin the neighbouring relation with $\mu$. Summarising the above observations: $\mu^*(u)=\mu^*(v)$, and thus $\Phi(\mu)=\mu$.\\

\noindent\underline{Step 3.} We saw that $\Phi(\mu)=\mu$ holds if $\mu$ satisfies \eqref{p1} and \eqref{p2}. Since $\Phi$ is continuous and $\mathcal{F}(X)$ is dense in $\Wp(X)$, it is enough to show that every $\nu:=\sum_{i=1}^L a_i\delta_{u_i}\in\mathcal{F}(X)$ can be approximated by such measures. Here we assume that $u_i\neq u_j$ if $i\neq j$ and that $a_i>0$ for all $1\leq i\leq L$. For an arbitrary $\varepsilon>0$ we are going to construct a measure $\nu'$ with $\supp{\nu}=\supp{\nu'}$  such that $d_p(\nu,\widetilde{\nu})<\varepsilon$.

If $L=1$ then $\nu$ itself satisfies \eqref{p1} and \eqref{p2}, so we can assume that $L\geq2$. Set
\begin{equation}K:=\max\{\varrho(u_i,u_j)\,|\,1\leq i,j\leq L\},
\end{equation}
and if necessary, choose a smaller $0<\widetilde\varepsilon\leq\varepsilon$ such that $0<a_i-\frac{\widetilde\varepsilon}{K^pL^2}$ holds for all $1\leq i\leq L$. Using such an $\widetilde\varepsilon$, the intersection of the cube
\begin{equation}
    C=\prod_{i=1}^n\left[a_i-\frac{\widetilde\varepsilon^p}{K^pL^2},a_i+\frac{\widetilde\varepsilon^p}{K^pL^2}\right]\subseteq\mathbb{R}^L
\end{equation}
 with the hyperplane $P=\{(c_1,\dots,c_L)\,|\,\sum_{i=1}^Lc_i=1\}$ contains only vectors $(c_1,\dots,c_L)$ such that $\sum_{i=1}^Lc_i\delta_{u_i}\in\Wp(X)$. The set of representing vectors of those measures which violate  \eqref{p1} or \eqref{p2} can be covered by the union of finitely many lower dimensional linear subspaces in $\mathbb{R}^L$. Since none of these subspaces are identical with $P$, we can choose an uncovered $(c_1,\dots,c_L)\in C\cap P$ and set $\nu'=\sum_{i=1}^Lc_i\delta_{u_i}$.

 We claim that $d_p(\nu,\nu')<\varepsilon$. To see this, we construct a $\pi\in\Pi(\nu,\nu')$ which leaves all mass shared by $\nu$ and $\nu'$ undisturbed. Set $m_i=\min\{a_i,c_i\}$ $(1\leq i \leq L)$ and $M:=\sum_{i=1}^L m_i$, and subtract $\sum_{i=1}^L m_i\delta_{u_i}$ from $\nu$ and $\nu'$. Now we have $\zeta:=\sum_{i=1}^L(a_i-m_i)\delta_{u_i}$ and $\zeta':=\sum_{i=1}^L(c_i-m_i)\delta_{u_i}$ with $\supp{\zeta}\cap\supp{\zeta'}=\emptyset$. For the product measure $\big(\zeta\times\zeta'\big)(u_i,u_j):=\zeta(u_i)\zeta'(u_j)$ we have $\big(\zeta\times\zeta'\big)(u_i,u_i)=0$ for all $1\leq i\leq L$, and if $i\neq j$ then
 \begin{equation}\label{56}
     \big(\zeta\times\zeta'\big)(u_i,u_j)=\zeta(u_i)\zeta'(u_j)\leq\zeta(u_i)=a_i-m_i\leq |a_i-c_i|\leq\frac{\widetilde\varepsilon^p}{K^pL^2}.
\end{equation}
Now define $\pi\in\Pi(\nu,\nu')$ as follows: $\pi(u_i,u_j):=\big(\zeta\times\zeta'\big)(u_i,u_j)$ if $i\neq j$, and $\pi(u_i,u_i):=m_i$ for $1\leq i\leq L$.  Since $\varrho^p(u_i,u_i)\pi(u_i,u_i)=0$ for all $1\leq i\leq L$, using \eqref{56} we have the following upper bound for $d_p(\nu,\nu')$
\begin{equation}d_p(\nu,\nu')\leq\sqrt[p]{\sum_{1\leq i,j\leq L}\varrho^p(u_i,u_j)\cdot\pi(u_i,u_j)}<\sqrt[p]{L^2K^p \frac{\widetilde\varepsilon^p}{K^pL}}=\widetilde\varepsilon\leq\varepsilon.
\end{equation}

\end{proof}

\section{Wasserstein spaces with prescribed isometry group}

A natural question was raised by K\H{o}nig in \cite{Konig}: \emph{which groups are isomorphic to the automorphism group of a graph?} We recall that an automorphism of a simple graph $G(X,E)$ is a permutation $f:X\to X$ such that for any two $x,y\in X$  the pair $\{x,y\}$ form an edge (i.e., belongs to $E$) if and only if the pair $\{f(x),f(y)\}$ also form an edge. The group of automorphisms will be denoted by $\mathrm{Aut}\big(G(X,E)\big)$.

Of course, one can replace graphs with other mathematical structures, for example with Wasserstein spaces, and ask the same question. Since in the metric context, automorphisms are in particular isometries, the corresponding question reads as follows: which groups are isomorphic to the isometry group of a Wasserstein space? Using some famous results in graph theory, the answer for countable groups is a corollary of Theorem \ref{t:main}. (For analogous results for autohomeomorphism groups see \cite[Theorem 7]{deGroot}.)

\begin{corollary}\label{c:aut}
Let $H$ be a countable group and $p\geq1$ any real number. Then there exists a metric space $(X,\varrho)$ such that $\mathrm{Isom}\big(\mathcal{W}_p(X),d_p\big)\cong H$.
\end{corollary}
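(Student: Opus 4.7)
The plan is to combine Theorem~\ref{t:main} with a classical graph-theoretic realization result, namely the theorem of Frucht (for finite groups) together with its extension by Sabidussi and de~Groot: every group $H$ is isomorphic to the automorphism group of some simple graph, and if $H$ is countable the graph can be chosen countable as well. Moreover, by a standard modification (for instance, pass to a suitable \emph{connected} variant of the Frucht--Sabidussi construction, or adjoin a fixed anchor vertex with an asymmetric gadget forcing connectedness without affecting the automorphism group), one may insist that the realizing graph be connected.

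Given a countable group $H$, I would therefore first invoke this result to fix a countable, simple, connected graph $G(X,E)$ with $\mathrm{Aut}\bigl(G(X,E)\bigr) \cong H$. Equipping $X$ with the shortest-path distance $\varrho$ then makes $(X,\varrho)$ a countable graph metric space in the sense of the preliminaries.

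The next step, which is essentially bookkeeping, is to identify $\mathrm{Aut}\bigl(G(X,E)\bigr)$ with $\mathrm{Isom}(X,\varrho)$. A graph automorphism preserves adjacency and hence preserves shortest-path lengths, so it is an isometry of $(X,\varrho)$. Conversely, an isometry $f$ of $(X,\varrho)$ satisfies $\varrho(f(x),f(y))=1$ iff $\varrho(x,y)=1$, i.e.\ $\{x,y\}\in E$ iff $\{f(x),f(y)\}\in E$, so $f$ is a graph automorphism. Thus $\mathrm{Aut}\bigl(G(X,E)\bigr) \cong \mathrm{Isom}(X,\varrho)$.

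Finally, applying Theorem~\ref{t:main} to this $(X,\varrho)$ yields
\begin{equation*}
\mathrm{Isom}\bigl(\mathcal{W}_p(X),d_p\bigr)\;\cong\;\mathrm{Isom}(X,\varrho)\;\cong\;\mathrm{Aut}\bigl(G(X,E)\bigr)\;\cong\;H,
\end{equation*}
which is the claim. The only non-trivial point is the appeal to the graph-realization theorem and the requirement of connectedness; this is where one must cite (or verify) the appropriate countable and connected version of Frucht's theorem. Everything else reduces to the already-established rigidity theorem and the elementary equivalence between graph automorphisms and isometries of the associated graph metric.
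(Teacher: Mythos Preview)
Your proposal is correct and follows essentially the same route as the paper: invoke the Frucht--de~Groot realization of a countable group as the automorphism group of a countable graph, identify graph automorphisms with isometries of the associated graph metric, and then apply Theorem~\ref{t:main}. The only difference is that you explicitly flag the need for the realizing graph to be connected (so that the shortest-path metric is defined), a point the paper passes over silently; otherwise the arguments coincide.
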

\begin{proof}
As an extension of Frucht's theorem \cite{Frucht}, de Groot proved that every countable group $H$ is isomorphic to the automorphism group of a countable simple graph $G(X,E)$ (see \cite[comments on p.96]{deGroot}). Let $(X,\varrho)$ be the metric space associated to $G(X,E)$ and consider the $p$-Wasserstein space $\mathcal{W}_p(X)$. According to Theorem \ref{t:main}, $\mathrm{Isom}\big(\mathcal{W}_p(X),d_p\big)\cong\mathrm{Isom}(X,\varrho)$, and therefore it is enough to show that $\mathrm{Isom}(X,\varrho)\cong\mathrm{Aut}\big(G(X,E)\big)$. It will turn out that these groups are identical as a set with the same operation (composition), so the identity map is an isomorphism.

If $\psi\in\mathrm{Isom}(X,\varrho)$ then for any pair $x,y\in X$ we have $\varrho(x,y)=1$ if and only if $\varrho(\psi(x),\psi(y))=1$. Or equivalently, $x$ and $y$ are joined by an edge if and only if $\psi(x)$ and $\psi(y)$ are joined by an edge. Since $\psi$ is a bijection, this means that $\psi\in\mathrm{Aut}\big(G(X,E)\big)$. On the other hand, every $\psi\in\mathrm{Aut}\big(G(X,E)\big)$ induces a length preserving bijection on the set of all paths as follows: if we have a path of length $k$ along the sequence of distinct vertices $x_0=x,x_1,\dots,x_k=y$, then the sequence $\psi(x_0),\dots,\psi(x_k)$ determines a path of length $k$ between $\psi(x)$ and $\psi(y)$. And therefore, the shortest path distance of $x$ and $y$ must be the same as the shortest path distance of $\psi(x)$ and $\psi(y)$. Since an automorphism is a bijection by definition, we have that $\psi\in\mathrm{Isom}(X,\varrho)$.
\end{proof}

We remark that there is no uniqueness above, because there is no uniqueness in Frucht's and de Groot's theorems. In fact, Izbicki proved in \cite{Izbicki} that there are uncountably many infinite graphs realizing any finite symmetry group. We also remark that although de Groot's theorem is valid for non-countable groups as well, we do not know the smallest possible order of the representing graph. However, it is important to note that if the cardinality of the vertex set is bigger than $\aleph_0$, then our method of proof does not work, therefore the following question remains open.
\begin{problem} Given an uncountable group $G$ and a fixed number $p\geq1$, does there exists a $p$-Wasserstein space whose isometry group is isomorphic to $G$?
\end{problem}

\newpage

\end{document}